\documentclass[11pt,a4paper,leqno]{amsart}
\usepackage[T1]{fontenc}
\usepackage{lmodern}
\usepackage{amsmath,amssymb,mathtools}
\usepackage[margin=25mm]{geometry}
\usepackage{microtype}
\usepackage{booktabs,array}
\usepackage[hidelinks]{hyperref}
\hypersetup{pdftitle={Distinctness and algebraic degrees of inverted Kloosterman sums},pdfauthor={Danyil Kyselov},pdfsubject={Exact distinctness over finite field extensions}}
\numberwithin{equation}{section}
\newtheorem{theorem}{Theorem}[section]
\newtheorem{proposition}[theorem]{Proposition}
\newtheorem{lemma}[theorem]{Lemma}
\newtheorem{corollary}[theorem]{Corollary}
\theoremstyle{remark}
\newtheorem{remark}[theorem]{Remark}
\newcommand{\F}{\mathbb F}
\newcommand{\Q}{\mathbb Q}

\newcommand{\Tr}{\operatorname{Tr}}
\newcommand{\Gal}{\operatorname{Gal}}
\newcommand{\IK}{\operatorname{IK}}

\newcommand{\cO}{\mathcal O}
\newcommand{\red}[1]{\overline{#1}}
\newcommand{\degree}{\operatorname{deg}}

\allowdisplaybreaks[1]
\title[Distinctness of inverted Kloosterman sums]{Distinctness and algebraic degrees of inverted Kloosterman sums}
\author{Danyil Kyselov}
\address{Odessa I. I. Mechnikov National University, 2 Dvoryanskaya Street, 65086 Odessa, Ukraine}
\email{daniil.kiselev@stud.onu.edu.ua}
\date{October 15, 2026}
\subjclass[2020]{Primary 11T23; Secondary 11L05, 11S80}
\keywords{Inverted Kloosterman sums, algebraic degree, distinctness, Gauss sums, Stickelberger congruence, finite fields}
\begin{document}
\begin{abstract}
We study the distinctness and algebraic degrees of inverted Kloosterman sums over finite extensions of $\F_p$. For each $d\ge2$, we prove that the $(d-1)$-dimensional sums over $\F_{p^2}$ are equal precisely when their parameters are Frobenius conjugate, provided that $p\equiv1\pmod d$, $p>2d+1$, and $p$ does not divide the nonzero integer $2(d+1)^d-\binom{2d}{d}^{2}$. For two-dimensional sums over $\F_{p^r}$, the same conclusion holds for $2\le r\le5$, $p\equiv1\pmod3$, and $p>3r+1$. In both cases we determine the algebraic degree of every sum in terms of the minimal polynomial of its parameter. The proof combines a cyclic-carry analysis of a Gauss-sum expansion with the recovery of power traces from a formal logarithm.
\end{abstract}
\maketitle

\section{Introduction}
Let $p$ be an odd prime, let $q=p^r$ with $r\ge1$, and put
\[
 \psi(x)=\zeta_p^{\Tr_{\F_q/\F_p}(x)},\qquad \zeta_p=e^{2\pi i/p}.
\]
For $d\ge2$ and $b\in\F_q^\times$, the $(d-1)$-dimensional inverted Kloosterman sum is
\begin{equation}\label{eq:def}
 \IK_{d-1}(q,b)=
 \sum_{\substack{x_1,\ldots,x_d\in\F_q^\times\\
 x_1\cdots x_d=b\\x_1+\cdots+x_d\ne0}}
 \psi\!\left(\frac1{x_1+\cdots+x_d}\right).
\end{equation}
This is an algebraic integer in $\Q(\zeta_p)$. We write
\[
 \degree\IK_{d-1}(q,b)=[\Q(\IK_{d-1}(q,b)):\Q].
\]
Lin and Wan studied the complex and $p$-adic properties of these sums in~\cite{LWfields}. They subsequently proved~\cite{LWdegree} that
\begin{equation}\label{eq:LWdegree}
 \degree\IK_{d-1}(p,b)=\frac{p-1}{\gcd(d,p-1)}
 \qquad(b\in\F_p^\times)
\end{equation}
and asked for an analogue over proper extensions of $\F_p$. We obtain degree formulas over quadratic extensions in arbitrary dimension and, in dimension two, over extensions of degrees at most five. The underlying result is a distinctness theorem: under the hypotheses stated below, two sums are equal if and only if their parameters belong to the same Frobenius orbit.

The relation between distinctness and algebraic degree follows from two symmetries. For $a\in\F_p^\times$, denote by $\sigma_a\in\Gal(\Q(\zeta_p)/\Q)$ the automorphism satisfying $\sigma_a(\zeta_p)=\zeta_p^a$. Then
\begin{align}
 \IK_{d-1}(q,b^p)&=\IK_{d-1}(q,b),\label{eq:frob}\\
 \sigma_a\IK_{d-1}(q,b)&=\IK_{d-1}(q,ba^{-d}).\label{eq:galois}
\end{align}
The first identity follows by applying Frobenius to the variables in~\eqref{eq:def}; the second follows by substituting $x_i=ay_i$. Thus the parameter map factors through Frobenius orbits, whereas its Galois conjugates are obtained by scalar multiplication of the parameter.

For $d\ge2$, set
\begin{equation}\label{eq:D}
 D_d=2(d+1)^d-\binom{2d}{d}^{\!2}.
\end{equation}
The integer $D_d$ is nonzero for all $d\ge2$; see Lemma~\ref{lem:Dnonzero}.

\begin{theorem}[Quadratic extensions]\label{thm:quadratic}
Let $d\ge2$, and suppose that
\begin{equation}\label{eq:quadcond}
 p\equiv1\pmod d,\qquad p>2d+1,\qquad p\nmid D_d.
\end{equation}
For $b,c\in\F_{p^2}^\times$, we have
\begin{equation}\label{eq:quaddist}
 \IK_{d-1}(p^2,b)=\IK_{d-1}(p^2,c)
 \quad\Longleftrightarrow\quad c\in\{b,b^p\}.
\end{equation}
Moreover,
\begin{equation}\label{eq:quaddegree}
 \degree\IK_{d-1}(p^2,b)=
 \begin{cases}
 (p-1)/d,&\Tr_{\F_{p^2}/\F_p}(b)\ne0,\\[2pt]
 (p-1)/\gcd(2d,p-1),&\Tr_{\F_{p^2}/\F_p}(b)=0.
 \end{cases}
\end{equation}
\end{theorem}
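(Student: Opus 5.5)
The plan is to deduce the degree formula \eqref{eq:quaddegree} from the distinctness statement \eqref{eq:quaddist} by orbit counting, and to prove \eqref{eq:quaddist} by a $p$-adic analysis of a Gauss-sum expansion. The direction ``$\Leftarrow$'' of \eqref{eq:quaddist} is just \eqref{eq:frob}.

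\textbf{Degree from distinctness.} Assume \eqref{eq:quaddist}. By \eqref{eq:galois} the Galois conjugates of $\IK_{d-1}(p^2,b)$ are the values $\IK_{d-1}(p^2,ba^{-d})$ with $a\in\F_p^\times$. Hence the degree equals $(p-1)/|S_b|$, where
\[
S_b=\{a\in\F_p^\times:\ ba^{-d}\in\{b,b^p\}\}.
\]
The condition $ba^{-d}=b$ means $a^d=1$, which has exactly $d$ solutions because $p\equiv1\pmod d$. The condition $ba^{-d}=b^p$ forces $b^{p-1}\in\F_p^\times$. Since $b^{p-1}$ has norm $1$, this gives $b^{p-1}=\pm1$. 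The case $b^{p-1}=1$ means $b\in\F_p$, and then $b^p=b$ adds nothing new. The case $b^{p-1}=-1$ means exactly $\Tr_{\F_{p^2}/\F_p}(b)=0$, and then we need $a^d=-1$. Together with $a^d=1$ this gives $|S_b|=\#\{a:a^{2d}=1\}=\gcd(2d,p-1)$. This yields \eqref{eq:quaddegree}. So everything reduces to the implication ``$\Rightarrow$'' in \eqref{eq:quaddist}.

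\textbf{Gauss-sum expansion.} Expand the condition $x_1\cdots x_d=b$ in multiplicative characters $\chi$ of $\F_{p^2}^\times$. For each $\chi$, scale out $s=x_1+\cdots+x_d$, writing $x_i=s y_i$ with $\sum y_i=1$. This separates the sum into a Jacobi sum $J(\chi,\ldots,\chi)=G(\chi)^d/G(\chi^d)$ (for $\chi^d$ nontrivial) times a Gauss sum in $s$ with character $\chi^d$ and additive argument $1/s$. The result is
\[
\IK_{d-1}(p^2,b)=\frac{1}{p^2-1}\sum_{\chi}\bar\chi(b)\,c_\chi ,
\]
with $c_\chi$ a product of the shape $G(\chi)^d\,G(\bar\chi^{\,d})/G(\chi^d)$, plus correction terms for the finitely many degenerate $\chi$.

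Now write $\chi=\omega^{-k}$ with $\omega$ the Teichmüller character. Apply the Gauss--Stickelberger congruence $G(\omega^{-k})\equiv-\pi^{s(k)}/(k_0!\,k_1!)$, where $\pi^{p-1}=-p$, $k=k_0+k_1p$ and $s(k)=k_0+k_1$. This gives $\mathrm{ord}_\pi c_\chi=d\,s(k)+s(-dk)-s(dk)$. Here $s(dk)$ requires a cyclic-carry analysis of multiplying base-$p$ digits by $d$ modulo $p^2-1$.

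\textbf{Extracting power traces.} The aim is to show that the $\pi$-adic expansion of $\IK_{d-1}(p^2,b)$, truncated at a suitable order, determines the power traces $\Tr_{\F_{p^2}/\F_p}(b^{-j})$ (via their Teichmüller lifts) for the small exponents $j$ where the valuation is minimal. Packaging these coefficients into a generating series and taking a formal logarithm should recover $\Tr(b)$ and $\mathrm{N}(b)$, up to the Galois twist $b\mapsto ba^{-d}$. The twist is then pinned down because the Galois action on $\pi$ is explicit. The hypothesis $p>2d+1$ guarantees that the relevant digit patterns have no carries and that the factorials are units. Knowing $\Tr(b)$ and $\mathrm{N}(b)$ determines the minimal polynomial of $b$ over $\F_p$, hence $c\in\{b,b^p\}$.

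\textbf{Main obstacle.} I expect the hardest step to be the second-order term. Among the contributions at the critical valuation, two distinct families compete. One comes from a digit pattern $(k_0,k_1)$ with factorial coefficient $\binom{2d}{d}^2$; the other comes from the carry pattern producing $(d+1)^d$ with a factor $2$ from the two Frobenius positions. Their combined coefficient is $D_d$. So $p\nmid D_d$ is exactly what keeps the trace of the second-order quantity from cancelling. I would first compute the lowest two valuation levels explicitly, enumerating all $k$ with $d\,s(k)+s(-dk)-s(dk)$ minimal, and verify that the coefficient equals $D_d$ up to a unit. Lemma~\ref{lem:Dnonzero} then ensures that this hypothesis is nonvacuous.
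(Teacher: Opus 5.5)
Your architecture is the paper's: the Lin--Wan expansion, Stickelberger, cyclic carries, and recovery of $\Tr(b^{-1})$ and $\mathrm{N}(b^{-1})$ from the first two $\pi$-adic levels. Your derivation of \eqref{eq:quaddegree} from distinctness is complete and correct. It is a direct form of Proposition~\ref{prop:degree}, with $b^{p-1}=\pm1$ in place of the coefficients of $P_b$.

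The gap is the distinctness step itself. You only announce it, and you describe its mechanics incorrectly in several places.
\begin{itemize}
\item[(i)] The term of least valuation is the character $\chi=\omega^{-M}$, $M=(p^2-1)/d$. It satisfies $\chi^d=1$, and you relegate it to a ``degenerate correction''. For this character $c_\chi=G(\omega^{-M})^d/q$ is a unit, whereas every other $b$-dependent term has valuation at least $d$. (The trivial-character terms do not depend on $b$ and cancel in the difference.) So the first comparison yields $b^M=c^M$. This common unit $\omega(b)^M$ must be cancelled before the higher levels can be compared.
\item[(ii)] There is no Galois twist to pin down. The equality is an identity in a fixed embedding, and $b\mapsto ba^{-d}$ simply multiplies the level-$k$ coefficient by $a^{dk}$. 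It is therefore handled by the same comparison.
\item[(iii)] This point concerns your ``main obstacle'': nothing at the second level comes from a carry. By Lemma~\ref{lem:carries}, a term with a nonzero cyclic carry lies at least $p$ above the leading term, while the second level lies only $2d$ above it. The contributions there are the carry-free characters $\omega^{-m}$ with $m=M-j_0-j_1p$ and $(j_0,j_1)\in\{(2,0),(0,2),(1,1)\}$. Up to the common unit they give
\[
 H_2(b)=a_2(d)\bigl(x^2+y^2\bigr)+a_1(d)^2\,xy,\qquad x=b^{-1},\ y=b^{-p}.
\]
The factor $(d+1)^d$ in $a_2(d)$ comes from the Stickelberger factorial $(h-2)!$, through $(h(h-1))^d\equiv\bigl((d+1)/d^2\bigr)^d\pmod p$ with $h\equiv-1/d$. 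The factor $2$ comes from $x^2+y^2=(x+y)^2-2xy$. Once $H_1(b)=a_1(d)\Tr(b^{-1})$ has been matched, the remaining coefficient of $xy=\mathrm{N}(b^{-1})$ is
\[
 a_1(d)^2-2a_2(d)=-a_1(d)^2D_d/\tbinom{2d}{d}^2 .
\]
This is where $p\nmid D_d$ enters.
\item[(iv)] The levels can only be separated by the argument of Proposition~\ref{prop:traces}. Once the level-$d$ residues agree, the level-$d$ difference lies in $p\cO_K$ and so sits at least $p-1$ higher, beyond level $2d$. All omitted terms also lie at least $p-1$ above the leading term. This requires $2d<p-1$, that is, $h\ge3$, and that is the essential role of the hypothesis $p>2d+1$.
\end{itemize}
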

In particular, for fixed $d$, the conclusion holds for all sufficiently large primes $p\equiv1\pmod d$.

For the degree formula in the second theorem, let
\[
 P_b(X)=X^s+c_1X^{s-1}+\cdots+c_s\in\F_p[X]
\]
be the minimal polynomial of $b\ne0$ over $\F_p$, and define
\begin{equation}\label{eq:gb}
 g_b=\gcd\{k:1\le k\le s,\ c_k\ne0\}.
\end{equation}
Since $c_s\ne0$, the integer $g_b$ is well defined.

\begin{theorem}[Two-dimensional sums]\label{thm:twodim}
Suppose that
\begin{equation}\label{eq:twocond}
 2\le r\le5,\qquad p\equiv1\pmod3,\qquad p>3r+1.
\end{equation}
For $b,c\in\F_{p^r}^\times$, we have
\begin{equation}\label{eq:twodist}
 \IK_2(p^r,b)=\IK_2(p^r,c)
 \quad\Longleftrightarrow\quad
 c=b^{p^j}\text{ for some }0\le j<r.
\end{equation}
For every $b\in\F_{p^r}^\times$,
\begin{equation}\label{eq:twodegree}
 \degree\IK_2(p^r,b)=\frac{p-1}{\gcd(3g_b,p-1)}.
\end{equation}
\end{theorem}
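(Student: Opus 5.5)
Theorem~\ref{thm:twodim} splits into a distinctness statement \eqref{eq:twodist} and a degree formula \eqref{eq:twodegree}, and I would first reduce the degree formula to distinctness. The direction ``$\Leftarrow$'' of \eqref{eq:twodist} is just \eqref{eq:frob}. Now assume distinctness. By \eqref{eq:galois}, $\sigma_a$ fixes $\IK_2(p^r,b)$ exactly when $ba^{-3}$ is Frobenius conjugate to $b$. Write $u=a^{-3}\in\F_p^\times$. The element $ub$ has minimal polynomial $X^s+uc_1X^{s-1}+\cdots+u^sc_s$. So $ub$ is conjugate to $b$ iff $u^k c_k=c_k$ for all $k$, that is, iff $u^{g_b}=1$. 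Hence the stabilizer of $\IK_2(p^r,b)$ in $\Gal(\Q(\zeta_p)/\Q)\cong\F_p^\times$ is $\{a: a^{3g_b}=1\}$. This subgroup has order $\gcd(3g_b,p-1)$, and the degree is its index, which gives \eqref{eq:twodegree}.

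The main work is the direction ``$\Rightarrow$'' of \eqref{eq:twodist}. My plan is to expand $\IK_2(q,b)$ $p$-adically in $\Q_p(\zeta_p)$ with uniformizer $\pi$, $\pi^{p-1}=-p$. I would write the additive character of $1/(x_1+x_2+x_3)$ via multiplicative characters, which gives
\[
\IK_2(q,b)=\frac{1}{q-1}\sum_{\chi}\bar\chi(b)\,G(\chi)^3\,G(\chi^{-3})\,(\text{normalisation})+\text{(trivial terms)}.
\]
Here the $G$ are Gauss sums. Taking $\chi=\omega^{-k}$ with $\omega$ the Teichmüller character, Stickelberger's congruence gives $G(\omega^{-k})\equiv -\pi^{s(k)}/\prod_i k_i!$, where $s(k)$ is the $p$-digit sum of $k$. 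So the $\pi$-adic valuation of the $k$-th term is governed by $3s(k)+s(-3k\bmod q-1)$. The cyclic-carry analysis is then a study of the carries in multiplying $k$ by $3$ modulo $q-1$. From it I want the minimal valuation, together with the exact set of $k$ attaining it and the next few levels. The hypotheses $p\equiv1\pmod3$ and $p>3r+1$ should be exactly what make $k=j(q-1)/3$-type exponents and the small-digit exponents behave cleanly.

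Next I would read off the leading $\pi$-adic coefficients as explicit functions of $b$. These come as sums of Teichmüller lifts $\sum_i \tilde b^{\,m p^i}$, i.e.\ power traces $\Tr(b^m)$ over $\F_p$ for $m=1,\dots,r$ (times rational constants coprime to $p$). Equality $\IK_2(q,b)=\IK_2(q,c)$ then forces $\Tr(b^m)=\Tr(c^m)$ for $m\le r$. To organize this I would package the expansion as a formal logarithm $\log\prod_i(1-\tilde b^{p^i}T)$. Its coefficients in degrees $\le r<p$ recover the elementary symmetric functions via Newton's identities, which is valid since $r<p$. The result is that the characteristic polynomials of $b$ and $c$ over $\F_p$ coincide, so $c=b^{p^j}$.

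The hard step is the carry analysis. I must show that for each $m\le r$ the coefficient of $\Tr(b^m)$ appears at a valuation level where all other contributions are either $b$-independent or already-determined lower power traces, and that the relevant constant is a $p$-adic unit. I expect this to need case-by-case bookkeeping, which is why $r\le5$ enters. I would first enumerate the digit vectors $k$ with $3s(k)+s(-3k)$ minimal and small increments for $r=2,\dots,5$ by hand. Then I would check that the constants, products of inverse factorials of digits, are nonzero mod $p$ given $p>3r+1$.
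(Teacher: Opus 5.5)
Your reduction of the degree formula to distinctness is correct; it is the same stabilizer argument the paper uses. The gap is in the trace-recovery step. You assert that the coefficient of the $k$th power trace is a $p$-adic unit because it is a product of inverse factorials of digits, but that fails for specific primes allowed by the hypotheses.

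Each individual carry-free term at level $B+3k$ (those with $m=M-\sum_i j_ip^i$, $\sum_i j_i=k$) does carry such a unit constant. However, these terms assemble into $H_k(b)=[T^k]\prod_{i=0}^{r-1}f_3(b^{-p^i}T)$, where $f_3(T)=\sum_{j\ge0}(1/3)_j^3\,((3j)!)^{-2}T^j$. They do not come from $\prod_i(1-\tilde b^{p^i}T)$. Consequently the coefficient of the new trace $\Tr(b^{-k})$ in $H_k$ is the logarithmic coefficient $\lambda_k(3)$ of $f_3$. This is an alternating combination, for instance $\lambda_3(3)=a_3-a_1a_2+a_1^3/3=163/826497043200$ with $a_j=a_j(3)$.

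For $k\le5$ the numerators of $\lambda_k(3)$ are $1,17,163,200407,23\cdot103\cdot193$. So $\lambda_k(3)\equiv0\pmod p$ for $p=163$ ($k=3$), $p=200407$ ($k=4$) and $p=103,193$ ($k=5$). All four primes satisfy $p\equiv1\pmod3$ and $p>3r+1$. For example, with $p=163$ and $r=3$ the expansion gives no information about $\Tr(b^{-3})-\Tr(c^{-3})$. Newton's identities on $\Tr(b^{-1}),\dots,\Tr(b^{-r})$ then cannot be applied, and no check of factorials will rescue this.

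The missing idea is a second recovery mechanism for these exceptional primes. Because $h=(p-1)/3$ is large, explicit computation finds $2r$ consecutive indices $u+1,\dots,u+2r<h$ with $\lambda_j(3)\not\equiv0\pmod p$. This gives $\Tr(b^{-j})=\Tr(c^{-j})$ on that block. A Vandermonde argument then finishes, using that the multiplicity differences between the multisets $\{b^{-p^i}\}$ and $\{c^{-p^i}\}$ are bounded by $r<p$. This arithmetic of $\lambda_j(3)$ is where $r\le5$ actually enters. The carry analysis itself is uniform in $r$ and needs no case-by-case enumeration.

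Two smaller corrections. First, the expansion contains $G(\chi^{-3})^2$, not $G(\chi^{-3})$. With your single factor, the valuation $3s(m)+s(N-t)$ would equal $B+(p-1)C$, which puts every carry-free term at the same level $B$. The correct valuation $2s(N-t)+3s(m)=B+3k$ is what separates the levels. Second, you need the leading term $m=M$ to give $b^M=c^M$. Only then does that term cancel exactly in the difference, before you read off the higher levels.
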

The degree formula includes parameters in proper subfields of $\F_{p^r}$; no assumption on $[\F_p(b):\F_p]$ is required.

The method is related to the $p$-adic approach to distinctness developed by Wan for classical hyper-Kloosterman sums~\cite{Wan}. For that family, Kononen, Rinta-aho and V\"a\"an\"anen~\cite{KRV} determined the generated fields under a distinctness hypothesis. Related questions for other exponential sums and generalized Kloosterman sums were considered by Lin~\cite{Lin25,Lin26}.

Our starting point is the Gauss-sum expansion of Lin and Wan~\cite{LWdegree}. Over an extension field, several characters can contribute at the same $p$-adic order. When $d\mid p-1$, the terms below a fixed precision bound are exactly those whose digit expansions have no cyclic carries. Their leading coefficients assemble into a product of one-variable formal series evaluated at the Frobenius conjugates of $b^{-1}$. Taking the logarithm separates the power traces of these conjugates. Newton's identities, or a Vandermonde argument using consecutive power traces, then give a general distinctness criterion.

Sections~\ref{sec:gauss} and~\ref{sec:carries} establish the Gauss-sum expansion and the required valuation bounds. The trace-recovery argument and the distinctness criterion are proved in Section~\ref{sec:traces}. Section~\ref{sec:degree} relates the Galois stabilizer to scalar symmetries of the minimal polynomial. The main theorems are proved in Section~\ref{sec:main}, and two consequences are given in Section~\ref{sec:consequences}.

\subsection*{Notation}
Unless otherwise indicated, $\Tr$ denotes the absolute trace to $\F_p$, and an overline denotes reduction modulo the maximal ideal of the relevant valuation ring. We use the rising factorial $(u)_0=1$, $(u)_j=u(u+1)\cdots(u+j-1)$ for $j\ge1$, and write $[T^j]f(T)$ for the coefficient of $T^j$ in $f(T)$. Rational coefficients are reduced modulo $p$ only when their denominators are prime to $p$.

\section{Gauss sums and \texorpdfstring{$p$}{p}-adic preliminaries}\label{sec:gauss}
For a multiplicative character $\chi$ of $\F_q^\times$, put
\[
 G(\chi)=\sum_{x\in\F_q^\times}\chi(x)\psi(x).
\]
Thus $G(1)=-1$. We recall the following expansion from Lin and Wan~\cite[\S2, formula (2.4)]{LWdegree}, including its proof to fix the normalizations.

\begin{lemma}\label{lem:gauss}
For $b\in\F_q^\times$,
\begin{equation}\label{eq:gauss}
 q(q-1)\IK_{d-1}(q,b)=-(q-1)^d+(-1)^d
 +\sum_{\chi\ne1}\chi^d(-1)\chi^{-1}(b)G(\chi^{-d})^2G(\chi)^d.
\end{equation}
\end{lemma}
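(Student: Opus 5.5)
Our plan is to expand the characteristic function of the constraint $x_1\cdots x_d=b$ by orthogonality of multiplicative characters, and to write $\psi(1/s)$ in terms of Gauss sums via the Mellin inversion $\psi(y)=\frac{1}{q-1}\sum_{\chi}G(\chi^{-1})\chi(y)$ for $y\ne0$. Concretely, set $s=x_1+\cdots+x_d$. We will first rewrite the condition $s\ne0$ as a difference: summing over all $x_i\in\F_q^\times$ with product $b$ and subtracting the terms with $s=0$ is awkward. It is cleaner to introduce a free variable: $\psi(1/s)=\frac{1}{q-1}\sum_{\lambda}G(\lambda^{-1})\lambda(1/s)=\frac1{q-1}\sum_\lambda G(\lambda^{-1})\lambda^{-1}(s)$, valid for $s\ne0$. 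Then $\lambda^{-1}(s)$ itself is expanded as $\lambda^{-1}(s)=\frac{1}{G(\lambda)}\sum_{t\in\F_q^\times}\lambda(t)\psi(ts)$ for $\lambda\ne1$ (with the trivial character handled separately, which is where the terms $-(q-1)^d+(-1)^d$ arise).

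The key steps, in order, are as follows. First, use $\sum_{\chi}\chi(x_1\cdots x_d b^{-1})=(q-1)\mathbf 1[x_1\cdots x_d=b]$ to free the variables. Second, for $\lambda\ne1$, insert the additive expansion above. Since $\psi(ts)=\prod_i\psi(tx_i)$ and the terms with $s=0$ contribute $\sum_t\lambda(t)=0$, the restriction $s\ne0$ can now be dropped. Third, the sum over $x_i$ factors as $\prod_i\sum_{x_i}\chi(x_i)\psi(tx_i)=\bar\chi^d(t)G(\chi)^d$. The sum over $t$ then forces $\lambda=\chi^d$ and contributes $q-1$. Fourth, use $G(\lambda)G(\lambda^{-1})=\lambda(-1)q$ for $\lambda\ne1$ to replace $G(\lambda^{-1})/G(\lambda)$ by $\lambda(-1)G(\lambda^{-1})^2/q$. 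With $\lambda=\chi^d$ this yields exactly the summand $\chi^d(-1)\chi^{-1}(b)G(\chi^{-d})^2G(\chi)^d$ after multiplying through by $q(q-1)$.

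Finally, we will collect the leftover pieces, which come from $\lambda=1$ and from characters $\chi$ with $\chi^d=1$ in the $\lambda$-analysis. For $\lambda=1$, the contribution is $G(1)=-1$ times the number of $x$ with product $b$ and $s\ne0$. Going the other way, we will recombine these with the corresponding $\chi^d=1$ terms in the stated sum, where $G(\chi^{-d})^2=G(1)^2=1$. The combined pieces should telescope to the count of all $d$-tuples, giving $-(q-1)^d$, together with the trivial-character correction $(-1)^d=\chi^d(-1)G(1)^2G(1)^d\cdot(\pm)$ from $\chi=1$, which is excluded from the sum. We expect this bookkeeping of the degenerate characters, namely the $\chi\ne1$ with $\chi^d=1$ and the $s=0$ locus, to be the main obstacle. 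To keep it straight, we will treat $\lambda=1$ and $\lambda\ne1$ separately and verify the constant by checking the total at the level of $\sum_b$, which reduces to counting all tuples with $s\ne0$.
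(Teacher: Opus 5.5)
Your steps one through four are sound. For $\lambda\ne1$ the locus $s=0$ contributes nothing, the $t$-sum forces $\lambda=\chi^d$, and $G(\lambda)G(\lambda^{-1})=\lambda(-1)q$ produces exactly the summands of \eqref{eq:gauss} with $\chi^d\ne1$. The gap is the final bookkeeping, which you only say \emph{should} telescope. Let $N_b$ be the number of $x\in(\F_q^\times)^d$ with $x_1\cdots x_d=b$ and $x_1+\cdots+x_d\ne0$. Your steps give
\[
 q(q-1)\IK_{d-1}(q,b)=-qN_b+\sum_{\chi^d\ne1}\chi^d(-1)\chi^{-1}(b)G(\chi^{-d})^2G(\chi)^d .
\]
Since $\chi^d(-1)G(\chi^{-d})^2=1$ when $\chi^d=1$, the lemma is therefore equivalent to the pointwise identity
\[
 qN_b=(q-1)^d-\sum_{\chi^d=1}\chi^{-1}(b)G(\chi)^d ,
\]
in which the term $\chi=1$ is $G(1)^d=(-1)^d$. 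Both sides depend on $b$: already for $d=2$, $N_b$ depends on whether $-b$ is a square. Your proposed check after summing over $b$ kills every $\chi\ne1$ on the right, so it verifies only the average over $b$. It cannot prove the identity for an individual $b$, and nothing in your argument shows the leftover is independent of $b$.

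The missing ingredient is a count of the zero-sum locus $Z_b=(q-1)^{d-1}-N_b$, the number of $x$ with product $b$ and $x_1+\cdots+x_d=0$. Detect the sum condition by $\frac1q\sum_{u\in\F_q}\psi(us)$ and the product condition by characters. The term $u=0$ gives $(q-1)^{d-1}$. For $u\ne0$ each inner sum is $\chi^{-1}(u)G(\chi)$, so $\sum_{u\ne0}\chi^{-d}(u)$ keeps exactly the characters with $\chi^d=1$. Hence $qZ_b=(q-1)^{d-1}+\sum_{\chi^d=1}\chi^{-1}(b)G(\chi)^d$, which is the required identity, and with it your route is complete.

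The paper avoids this step. It uses $\sum_{u\in\F_q}\sum_{\lambda\in\F_q^\times}\psi(u(s-\lambda)+\lambda^{-1})$, which equals $q\psi(1/s)$ for $s\ne0$, vanishes for $s=0$, and factors over the $x_i$ without excluding $s=0$. All $\chi\ne1$, including those with $\chi^d=1$, then come out of one uniform computation, and no reciprocity formula for Gauss sums is needed.
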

\begin{proof}
By additive and multiplicative character orthogonality, the left-hand side of~\eqref{eq:gauss} equals
\[
 \sum_\chi\chi^{-1}(b)\sum_{u\in\F_q}\sum_{\lambda\in\F_q^\times}
 \psi(-u\lambda+\lambda^{-1})
 \prod_{i=1}^d\sum_{x_i\in\F_q^\times}\chi(x_i)\psi(ux_i).
\]
The terms with $u=0$ contribute $-(q-1)^d$. For $u\ne0$, each inner sum is $\chi^{-1}(u)G(\chi)$. Summing first over $u$ and then putting $t=\lambda^{-1}$ gives
\[
 \sum_{\lambda\ne0}\psi(\lambda^{-1})
 \sum_{u\ne0}\chi^{-d}(u)\psi(-u\lambda)
 =\chi^d(-1)G(\chi^{-d})^2.
\]
The contribution of the trivial character is $G(1)^{d+2}=(-1)^d$, which gives~\eqref{eq:gauss}.
\end{proof}

Fix an embedding $\Q(\zeta_p)\hookrightarrow\overline{\Q}_p$. Let $K/\Q_p$ be the unramified extension of degree $r$, identified with a subfield of $\overline{\Q}_p$, and let $\omega:\F_q^\times\to\cO_K^\times$ be the Teichm\"uller character. Choose $\pi\in\Q_p(\zeta_p)$ such that
\begin{equation}\label{eq:pi}
 \pi^{p-1}=-p,\qquad \pi\equiv\zeta_p-1\pmod{\pi^2},
\end{equation}
and normalize $v_\pi$ by $v_\pi(\pi)=1$. If
\[
 m=m_0+m_1p+\cdots+m_{r-1}p^{r-1},\qquad 0\le m_i\le p-1,
\]
write $s(m)=\sum_i m_i$.

\begin{lemma}\label{lem:stick}
For $0<m<q-1$, there is a unit $U_m\in\cO_K^\times$ such that
\begin{equation}\label{eq:stick}
 G(\omega^{-m})=\pi^{s(m)}U_m,
 \qquad \red{U_m}=-\frac1{m_0!\cdots m_{r-1}!}.
\end{equation}
Equivalently, $U_m\equiv-(m_0!\cdots m_{r-1}!)^{-1}\pmod{p\cO_K}$.
\end{lemma}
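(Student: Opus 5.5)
The plan is to prove the congruence in~\eqref{eq:stick} first, in the larger field $K(\pi)=K(\zeta_p)$, via Stickelberger's congruence. After that, a Galois argument shows that the quotient $G(\omega^{-m})/\pi^{s(m)}$ lies in $K$. The final step transfers the congruence modulo the maximal ideal of $\cO_{K(\pi)}$ to the statement in $\cO_K$.

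\emph{Step 1 (valuation and leading term).} I would cite Stickelberger's congruence in the normalization~\eqref{eq:pi}:
\[
 G(\omega^{-m})\equiv-\frac{\pi^{s(m)}}{m_0!\cdots m_{r-1}!}\pmod{\pi^{s(m)+1}}.
\]
It is equivalent to read it off the Gross--Koblitz formula
\[
 G(\omega^{-m})=-\pi^{s(m)}\prod_{i=0}^{r-1}\Gamma_p\bigl(\langle p^im/(q-1)\rangle\bigr)
\]
and reduce each $\Gamma_p$-value modulo $\pi$. For a self-contained route I would use Dwork's splitting function $\theta(t)=\exp(\pi t-\pi t^p)$. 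The condition $\pi\equiv\zeta_p-1$ gives $\theta(1)=\zeta_p$, so $\psi(x)=\prod_{i=0}^{r-1}\theta(\omega(x)^{p^i})$. Write $\theta(t)=\sum_n\lambda_nt^n$, where $\lambda_n=\pi^n/n!$ for $n<p$ and $v_\pi(\lambda_n)\ge n$ in general. Expanding and applying orthogonality of $\omega$ gives
\[
 G(\omega^{-m})=(q-1)\sum\prod_i\lambda_{n_i},
\]
where the sum runs over tuples $(n_i)$ with $\sum n_ip^i\equiv m\pmod{q-1}$. The base-$p$ digit tuple $n_i=m_i$ contributes $\pi^{s(m)}/\prod m_i!$, and $q-1\equiv-1$ supplies the sign. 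Every other admissible tuple has $\sum n_i\ge s(m)+(p-1)$, because each cyclic carry raises the digit-sum count by $p-1$. This is the point I expect to be the main obstacle: the estimate $v_\pi(\lambda_n)\ge n$ for $n\ge p$ must be checked carefully, or else replaced by the citation of Stickelberger's theorem. Either way, the result is $G(\omega^{-m})=\pi^{s(m)}U_m$ with $U_m\in\cO_{K(\pi)}$ and $\red{U_m}=-1/\prod m_i!$. Since $0\le m_i\le p-1$, this residue is nonzero, so $U_m$ is a unit.

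\emph{Step 2 ($U_m\in K$).} Take $\tau\in\Gal(K(\pi)/K)$ with $\tau(\zeta_p)=\zeta_p^a$, where $a\in\F_p^\times$. Substituting $x\mapsto a^{-1}x$ gives $\tau G(\omega^{-m})=\omega^{m}(a)G(\omega^{-m})$. The element $\tau(\pi)$ is again a root of $X^{p-1}=-p$, so $\tau(\pi)=\omega(a)\pi$, the root of unity being pinned down by $\tau(\pi)\equiv\zeta_p^a-1\equiv a\pi\pmod{\pi^2}$. Because $a\in\F_p$, we have $\omega(a)^{p-1}=1$. Also $m\equiv s(m)\pmod{p-1}$, since $p^i\equiv1$. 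Hence $\omega(a)^m=\omega(a)^{s(m)}$, so $\tau$ fixes $U_m$ and therefore $U_m\in\cO_K^\times$.

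\emph{Step 3 (mod $p$).} The extension $K(\pi)/K$ is totally ramified, so $\cO_K/p\cO_K$ and $\cO_{K(\pi)}/(\pi)$ are the same residue field $\F_q$. Inside $\cO_K$, reduction modulo $\pi$ coincides with reduction modulo $p$. The congruence from Step 1 therefore becomes $U_m\equiv-(m_0!\cdots m_{r-1}!)^{-1}\pmod{p\cO_K}$, which is the statement.
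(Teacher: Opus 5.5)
Your proposal is correct and follows the paper's proof. Like the paper, you cite Stickelberger (or Gross--Koblitz) for the valuation and residue, descend $U_m$ to $\cO_K^\times$ using $\sigma_a(\pi)=\omega(a)\pi$ and $s(m)\equiv m\pmod{p-1}$, and identify residues modulo $p\cO_K$. One caution on the optional Dwork route you flagged: the bound $v_\pi(\lambda_n)\ge n$ holds only for $n<p^2$, since writing $\theta(t)=E(\pi t)\exp\bigl(-\sum_{i\ge2}(\pi t)^{p^i}/p^i\bigr)$ with $E$ the Artin--Hasse exponential gives $v_\pi(\lambda_{p^2})=p^2-2p+2$; that route would need a finer estimate, but relying on the citation, as the paper does, is fine.
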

\begin{proof}
The valuation and residue in~\eqref{eq:stick} are given by the Stickelberger congruence, or by the Gross--Koblitz formula~\cite{GK}. It remains to show that the normalized Gauss sum belongs to the unramified field $K$.

Extend $\sigma_a$ to $K(\zeta_p)$ by fixing $K$. Then
\[
 \sigma_aG(\omega^{-m})=\omega(a)^mG(\omega^{-m}),
 \qquad \sigma_a(\pi)=\omega(a)\pi.
\]
Indeed, $\sigma_a(\pi)/\pi$ is a $(p-1)$st root of unity with residue $a$, by~\eqref{eq:pi}. Since $s(m)\equiv m\pmod{p-1}$, the quotient $G(\omega^{-m})/\pi^{s(m)}$ is fixed by $\Gal(K(\zeta_p)/K)$, and therefore lies in $K$. Its valuation is zero, so it belongs to $\cO_K^\times$. The final congruence follows because $p\cO_K$ is the maximal ideal of $\cO_K$.
\end{proof}

\section{Cyclic carries and valuation bounds}\label{sec:carries}
Assume throughout this section that $d\mid p-1$. Put
\begin{equation}\label{eq:parameters}
 N=p^r-1,\qquad h=\frac{p-1}{d},\qquad
 M=\frac Nd=h(1+p+\cdots+p^{r-1}),\qquad B=r(p-1).
\end{equation}
For $0<m<N$ with $N\nmid dm$, let $t$ be the representative of $dm$ modulo $N$ in $\{1,\ldots,N-1\}$. By Lemma~\ref{lem:stick}, the valuation of the Gauss product corresponding to $\chi=\omega^{-m}$ in~\eqref{eq:gauss} is
\begin{equation}\label{eq:W}
 W(m):=v_\pi\bigl(G(\omega^{dm})^2G(\omega^{-m})^d\bigr)
 =2s(N-t)+d\,s(m).
\end{equation}

\begin{lemma}[Cyclic carries]\label{lem:carries}
Write $m=\sum_i m_ip^i$ and $t=\sum_i t_ip^i$. There are integers $c_i\in\{0,\ldots,d-1\}$, with $c_{-1}=c_{r-1}$, such that
\begin{equation}\label{eq:carry}
 dm_i+c_{i-1}=t_i+pc_i\qquad(0\le i<r).
\end{equation}
If $C=\sum_i c_i$, then
\begin{equation}\label{eq:carryW}
 s(t)=d\,s(m)-(p-1)C,\qquad
 W(m)=2B-s(t)+(p-1)C.
\end{equation}
If $C\ge1$, then $W(m)\ge B+p$. If $C=0$, put $j_i=h-m_i$ and $k=\sum_i j_i$. Then $j_i\ge0$ and
\begin{equation}\label{eq:nocarryW}
 W(m)=B+dk.
\end{equation}
Consequently, the terms with $N\nmid dm$ and $W(m)<B+p-1$ are exactly those for which
\begin{equation}\label{eq:lowm}
 m=M-\sum_{i=0}^{r-1}j_ip^i,
 \qquad j_i\ge0,\qquad 1\le\sum_i j_i<h.
\end{equation}
\end{lemma}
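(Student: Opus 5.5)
The plan is to make the cyclic reduction of $dm$ modulo $N=p^r-1$ explicit through a carry recursion, and then read off every claim from digit-sum bookkeeping.

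\emph{Existence and uniqueness of the carries.} For a given incoming carry $c\in\{0,\ldots,d-1\}$, define the carries along the digits $i=0,1,\ldots,r-1$ by the usual rule: $t_i$ and $c_i$ are the remainder and quotient of $dm_i+c_{i-1}$ upon division by $p$, starting from $c_{-1}=c$. Since $dm_i+c_{i-1}\le d(p-1)+d-1<dp$, every carry again lies in $\{0,\ldots,d-1\}$. Let $f(c)$ denote the final carry $c_{r-1}$ produced from the initial carry $c$.
\begin{itemize}
\item The map $f$ is nondecreasing from $\{0,\ldots,d-1\}$ to itself, so it has a fixed point. Choosing $c_{-1}$ to be this fixed point gives \eqref{eq:carry} cyclically, with digits $t'_i\in\{0,\ldots,p-1\}$.
\item Multiply \eqref{eq:carry} by $p^i$ and sum over $i$. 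Using $p^r\equiv1 \pmod N$ and $c_{-1}=c_{r-1}$, we get $\sum_i t'_ip^i\equiv dm\pmod N$.
\item This integer lies in $[0,N]$, and $N\nmid dm$. Hence it equals the representative $t$, and the $t'_i$ are the base-$p$ digits of $t$.
\item For uniqueness, suppose two carry vectors satisfy \eqref{eq:carry} for the same $m$ and $t$. Their difference $\delta$ satisfies $\delta_{i-1}=p\delta_i$ cyclically. This forces $|\delta_i|=p^{-r}|\delta_i|$, so $\delta=0$. In particular $C$ is well defined.
\end{itemize}

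\emph{Digit-sum identities.} Summing \eqref{eq:carry} over $i$ gives $s(t)=d\,s(m)+C-pC$, which is the first identity of \eqref{eq:carryW}. The digits of $N-t$ are $p-1-t_i$, so $s(N-t)=B-s(t)$. Substituting into \eqref{eq:W} gives
\[
W(m)=2B-2s(t)+d\,s(m)=2B-s(t)+(p-1)C .
\]
If $C\ge1$, observe that $s(t)\le B$, with equality only if $t=N$, which is excluded. Hence $s(t)\le B-1$ and $W(m)\ge B+1+(p-1)=B+p$.

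\emph{The carry-free case.} If $C=0$, then $dm_i=t_i\le p-1$, so $m_i\le h$ and $j_i\ge0$. Since $s(m)=rh-k$, we get $s(t)=d(rh-k)=B-dk$, and therefore $W(m)=B+dk$.

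\emph{The final characterization.} Suppose $W(m)<B+p-1$. By the previous steps this forces $C=0$ and $dk<p-1$, that is, $k<h$. Moreover $k\ge1$, because $k=0$ gives $m=M$ and then $dm=N$, which is excluded.

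Conversely, suppose $m=M-\sum j_ip^i$ with $j_i\ge0$ and $1\le k<h$.
\begin{itemize}
\item Each digit satisfies $0\le m_i\le h$, and $0<m<N$ holds.
\item We have $dm=N-d\sum j_ip^i$, which lies strictly between $0$ and $N$, so $N\nmid dm$.
\item All-zero carries solve \eqref{eq:carry}, so by uniqueness $C=0$.
\item Therefore $W(m)=B+dk<B+p-1$.
\end{itemize}

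The only delicate step is the first one: the existence of a cyclic carry vector with carries bounded by $d-1$, together with its uniqueness, is what makes $C$ well defined. The fixed-point argument for the monotone map $f$ and the contraction argument for $\delta$ should settle it. Everything after that is bookkeeping.
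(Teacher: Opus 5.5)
Your proof is correct. From the digit-sum identities onward it follows the paper step for step: summing \eqref{eq:carry}, using $s(N-t)=B-s(t)$ and the bound $s(t)\le B-1$, the carry-free computation, and both directions of the final characterization.

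The one real difference is how you obtain the cyclic carries. The paper sets $c_{r-1}=\lfloor dm/N\rfloor$ directly and notes that $dm+c_{r-1}=t+c_{r-1}p^r$. It then runs ordinary base-$p$ multiplication of $m$ by $d$ with carry-in $c_{r-1}$. Uniqueness of base-$p$ expansions forces the output digits to be those of $t$ and the final carry to be $c_{r-1}$ again. So existence and the identification with $t$ come in a single step.

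You instead take the initial carry to be a fixed point of the monotone final-carry map, and identify the digits afterwards via the telescoped identity $dm=\sum_i t_i'p^i+c_{r-1}N$. This is valid but less direct. That identity, applied to any cyclic carry vector, already forces $c_{r-1}=\lfloor dm/N\rfloor$. So your fixed point is the paper's choice, and forward recursion then gives uniqueness without the contraction argument.

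Uniqueness does make $C$ an invariant of $m$, but the lemma does not need it. Every identity in \eqref{eq:carryW} and \eqref{eq:nocarryW} is derived from \eqref{eq:carry} alone. So in the converse direction it suffices to exhibit the zero carry vector, as the paper does. That vector works because $t=dm$ and $dm_i=p-1-dj_i\le p-1$.
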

\begin{proof}
Put $c_{r-1}=\lfloor dm/N\rfloor$. The identity
\[
 dm+c_{r-1}=t+c_{r-1}p^r
\]
allows ordinary base-$p$ multiplication to be carried out with initial carry $c_{-1}=c_{r-1}$. At each step the carry lies between $0$ and $d-1$, and the final carry is again $c_{r-1}$. This gives \eqref{eq:carry}.

Summing \eqref{eq:carry} over the digit positions yields the first equality in \eqref{eq:carryW}. Since $s(N-t)=B-s(t)$, equation \eqref{eq:W} gives the second. Now $0<t<N$ implies $s(t)\le B-1$, so $C\ge1$ gives
\[
 W(m)\ge2B-(B-1)+(p-1)=B+p.
\]
If $C=0$, all carries vanish and $dm_i=t_i\le p-1$. Hence $m_i\le h$. Substitution of $m_i=h-j_i$ in \eqref{eq:W} proves \eqref{eq:nocarryW}. The case $k=0$ would give $m=M$ and $N\mid dm$, which has been excluded. Thus $W(m)<B+p-1$ is equivalent to $1\le k<h$.

Conversely, for every weak composition $k=\sum_i j_i$ with $1\le k<h$, the digits $m_i=h-j_i$ lie in $\{1,\ldots,h\}$, are not all equal to $h$, and produce no carries. Their integer $m$ satisfies $0<m<M$ and hence $N\nmid dm$. This proves the last assertion.
\end{proof}

The characters satisfying $\chi^d=1$ are treated separately.
\begin{lemma}\label{lem:exceptionalchars}
If $0<m<N$ and $N\mid dm$, then $m=\ell M$ for a unique $1\le\ell\le d-1$. Its base-$p$ digits are all $\ell h$, and
\begin{equation}\label{eq:exceptionalW}
 v_\pi\bigl(G(\omega^{dm})^2G(\omega^{-m})^d\bigr)=\ell B.
\end{equation}
\end{lemma}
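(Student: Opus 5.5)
The lemma reduces to elementary divisibility and digit counting, combined with Lemma~\ref{lem:stick}. Here is the plan.

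First, I identify $m$. Since $N\mid dm$ and $0<m<N$, we have $dm=\ell N$ for an integer $\ell$ with $0<\ell<d$. Because $M=N/d$, this means $m=\ell M$. The integer $\ell$ is unique because it equals $dm/N$.

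Next, I compute the digits. From \eqref{eq:parameters}, $\ell M=\ell h(1+p+\cdots+p^{r-1})$. Since $\ell\le d-1$, we get $\ell h\le (d-1)(p-1)/d<p$. So every base-$p$ digit of $m$ equals $\ell h$, with no carrying, and $s(m)=r\ell h$.

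Finally, I compute the valuation. Here $\chi^d=\omega^{-dm}$ is trivial, so $G(\omega^{dm})=G(1)=-1$ has valuation $0$. Since $0<m<N$, Lemma~\ref{lem:stick} applies to $G(\omega^{-m})$ and gives $v_\pi(G(\omega^{-m}))=s(m)=r\ell h$. Therefore the total valuation is
\[
 d\cdot r\ell h=\ell r(p-1)=\ell B,
\]
which is \eqref{eq:exceptionalW}.

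The only point needing care is the final step, where \eqref{eq:W} cannot be used directly because $t$ is undefined when $N\mid dm$. The factor $G(\omega^{dm})^2$ must instead be evaluated as $G(1)^2=1$. Beyond that, everything is routine.
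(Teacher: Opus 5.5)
Your proposal is correct and follows the paper's argument: you write $m=\ell M$ using $N=dM$, note that the digits $\ell h<p$ give $s(m)=r\ell h$, use $G(\omega^{dm})=G(1)=-1$, and apply Lemma~\ref{lem:stick} to obtain $dr\ell h=\ell B$. You also justify the range $0<\ell<d$ and the uniqueness of $\ell$ explicitly, which the paper's proof leaves implicit.
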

\begin{proof}
Since $N=dM$, the condition $N\mid dm$ is equivalent to $M\mid m$. For $m=\ell M$, all digits are $\ell h<p$, while $G(\omega^{dm})=G(1)=-1$. Hence Lemma~\ref{lem:stick} gives $v_\pi(G(\omega^{-m})^d)=dr\ell h=\ell B$.
\end{proof}

\section{Power traces and distinctness}\label{sec:traces}
Consider the formal power series over $\Q$
\begin{equation}\label{eq:fseries}
 f_d(T)=\sum_{j\ge0}a_j(d)T^j,\qquad
 a_j(d)=\frac{(1/d)_j^d}{((dj)!)^2},\qquad
 \log f_d(T)=\sum_{j\ge1}\lambda_j(d)T^j.
\end{equation}
Retain the notation $h=(p-1)/d$ from~\eqref{eq:parameters}. For $j<h$, the coefficients $a_j(d)$ and $\lambda_j(d)$ are $p$-integral, since $dj<p-1$ and $j<p$. Thus, for $0\le k<h$ and $b\in\F_q^\times$, we may define
\begin{equation}\label{eq:H}
 H_k(b)=[T^k]\prod_{i=0}^{r-1}f_d(b^{-p^i}T)\in\F_p.
\end{equation}
Here the product is formed modulo $T^h$ after reducing its coefficients modulo $p$. Frobenius permutes the factors, so each $H_k(b)$ belongs to $\F_p$.

\begin{proposition}[Recovery of power traces]\label{prop:traces}
Assume $d\mid p-1$. If
\[
 \IK_{d-1}(q,b)=\IK_{d-1}(q,c),
\]
then $b^M=c^M$, and, for every $1\le k<h$,
\begin{align}
 H_k(b)&=H_k(c),\label{eq:Hequal}\\
 \lambda_k(d)\bigl(\Tr(b^{-k})-\Tr(c^{-k})\bigr)&=0
 \quad\text{in }\F_p.\label{eq:traceequal}
\end{align}
\end{proposition}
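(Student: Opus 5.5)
The plan is to subtract the two instances of Lemma~\ref{lem:gauss} for $b$ and $c$ and read off the resulting identity $\pi$-adically, one valuation level at a time. The constant $-(q-1)^d+(-1)^d$ cancels, leaving
\[
 0=\sum_{\chi\ne1}\chi^d(-1)\bigl(\chi^{-1}(b)-\chi^{-1}(c)\bigr)G(\chi^{-d})^2G(\chi)^d .
\]
Write $\chi=\omega^{-m}$ and sort the terms by the valuations computed in Lemmas~\ref{lem:carries} and~\ref{lem:exceptionalchars}:
\begin{itemize}
\item the exceptional term $\ell=1$, i.e.\ $m=M$, sits at valuation exactly $B$;
\item the exceptional terms with $\ell\ge2$ sit at valuation $\ell B\ge 2B\ge B+p-1$;
\item the carry-free terms \eqref{eq:lowm} sit at valuation $B+dk$ with $1\le k<h$, so $dk\le p-1-d<p-1$;
\item every other term has valuation $\ge B+p-1$.
\end{itemize}

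\textbf{Step 1: $b^M=c^M$.} The $m=M$ term equals a unit of $\cO_K$ times $\pi^B\bigl(\omega(b)^M-\omega(c)^M\bigr)$. Both $\omega(b)^M$ and $\omega(c)^M$ are $d$-th roots of unity, and $p\nmid d$. Hence if they differ, their difference is a unit, and this single term has valuation exactly $B$. Every other term has strictly larger valuation, which is a contradiction. So $\omega(b)^M=\omega(c)^M$, i.e.\ $b^M=c^M$, and the $\ell=1$ term drops out.

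\textbf{Step 2: the carry-free coefficients.} After Step 1, only the terms with $1\le k<h$ survive below $\pi^{B+p-1}$. For such a term put $m_i=h-j_i$ with $\sum_i j_i=k$. Then:
\begin{itemize}
\item $\chi^{-1}(b)=\omega(b)^M\prod_i\omega(b)^{-j_ip^i}$;
\item the digits of $N-t$ are $p-1-dm_i=dj_i$, so Lemma~\ref{lem:stick} gives $G(\omega^{dm})^2$ residue $\prod_i((dj_i)!)^{-2}$ times $\pi^{2s(N-t)}$;
\item $G(\omega^{-m})^d$ has residue $(-1)^d\prod_i(m_i!)^{-d}$. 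Since $h\equiv-1/d\pmod p$, we have $h!/(h-j)!\equiv(-1)^j(1/d)_j$, so $\prod_i(m_i!)^{-d}\equiv(h!)^{-dr}\prod_i(-1)^{dj_i}(1/d)_{j_i}^d$.
\end{itemize}
The sign $\chi^d(-1)$ and the signs $(-1)^{dj_i}$ depend only on $k$ (and $\omega(-1)^{dM}$) and are common to $b$ and $c$. Therefore the total level-$(B+dk)$ contribution is
\[
 \pi^{B+dk}\,\omega(b)^M\,E\,\varepsilon^k\bigl(\widetilde H_k(b)-\widetilde H_k(c)\bigr),
\]
where $E\in\cO_K^\times$ is a fixed unit, $\varepsilon=\pm1$, and $\widetilde H_k(b)\in\cO_K$ lifts $H_k(b)$. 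The lift holds because summing $\prod_i a_{j_i}(d)\,\omega(b)^{-j_ip^i}$ over weak compositions of $k$ is exactly $[T^k]\prod_i f_d(b^{-p^i}T)$. The higher-order parts of the $U_m$ only add $p\cO_K$-multiples, and these are pushed to valuation $\ge B+dk+p-1$.

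\textbf{Step 3: separating the levels.} This is the step needing the most care. We now have
\[
 \sum_{k=1}^{h-1}\pi^{dk}u_k\equiv0\pmod{\pi^{p-1}},\qquad u_k\in\cO_K .
\]
Since $K$ is unramified, $v_\pi(\pi^{dk}u_k)\equiv dk\pmod{p-1}$. The residues $dk$ are pairwise distinct for $1\le k<h$, so no cancellation between levels is possible. Induction on $k$ then gives $u_k\equiv0\pmod p$ for every $k$. This yields $H_k(b)=H_k(c)$ for $1\le k<h$, which is \eqref{eq:Hequal}; the case $k=0$ is trivial since both equal $1$.

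\textbf{Step 4: from $H_k$ to traces.} The coefficients $\lambda_j(d)$ are $p$-integral for $j<h$, and reducing modulo $p$ and truncating modulo $T^h$ commutes with taking the logarithm. Modulo $T^h$ over $\F_p$ we therefore get
\[
 \log\prod_{i=0}^{r-1}f_d(b^{-p^i}T)=\sum_{j\ge1}\lambda_j(d)\Tr(b^{-j})\,T^j .
\]
Both products have constant term $1$ and equal truncations, so their logarithms agree; all degrees involved are $<h<p$, so the truncated logarithm is defined. Comparing coefficients gives \eqref{eq:traceequal}.
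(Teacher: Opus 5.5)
Your proof is correct and takes essentially the same route as the paper. You isolate the $m=M$ term at valuation $B$ to get $b^M=c^M$, identify the carry-free terms at level $B+dk$ as lifts of a unit multiple of $H_k(b)-H_k(c)$ via $h\equiv-1/d\pmod p$, separate the levels below $B+p-1$, and then compare truncated logarithms. The only difference is cosmetic: you separate the levels by noting that the valuations $dk$ are distinct modulo $p-1$ (since $K$ is unramified), whereas the paper argues with the least $k$ whose coefficient has nonzero residue; both arguments are valid.
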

\begin{proof}
Apply~\eqref{eq:gauss} to the difference of the two sums. Lemmas~\ref{lem:carries} and~\ref{lem:exceptionalchars} show that $m=M$ is the unique term of valuation $B$, all other terms having larger valuation. Dividing by $\pi^B$ and reducing gives
\[
 \frac{(-1)^d}{(h!)^{dr}}(b^M-c^M).
\]
since $\chi^d(-1)=1$ for $m=M$. It follows that $b^M=c^M$, and hence their Teichm\"uller lifts satisfy
\begin{equation}\label{eq:A}
 \omega(b)^M=\omega(c)^M=:A.
\end{equation}
Consequently, all terms with $m=\ell M$ cancel in the difference before reduction.

For a term of the form~\eqref{eq:lowm}, write $k=\sum_i j_i<h$. The digits of $N-dm$ are $dj_i$, so Lemma~\ref{lem:stick} gives the following residue after division of the Gauss product by $\pi^{B+dk}$:
\[
 (-1)^d\prod_{i=0}^{r-1}\frac1{(h-j_i)!^d(dj_i)!^2}.
\]
Its parameter factor is
\[
 \omega(b)^m=A\prod_i\omega(b)^{-j_ip^i},
\]
and its sign $\chi^d(-1)$ is $(-1)^{dk}$. Since $h\equiv-1/d\pmod p$, we have
\begin{equation}\label{eq:factorial}
 \left(\frac{h!}{(h-j)!}\right)^d
 \equiv(-1)^{dj}(1/d)_j^d\pmod p.
\end{equation}
The signs in~\eqref{eq:factorial} cancel the character factor $(-1)^{dk}$. Summation over the weak compositions $j_0+\cdots+j_{r-1}=k$ therefore gives $H_k(b)$, up to a common nonzero factor.

More precisely, put
\[
 \Delta=qN\bigl(\IK_{d-1}(q,b)-\IK_{d-1}(q,c)\bigr).
\]
After cancellation of the terms $m=\ell M$, we obtain
\begin{equation}\label{eq:precision}
 \frac{\Delta}{\pi^B}=\sum_{k=1}^{h-1}\pi^{dk}C_k+R,
 \qquad C_k\in\cO_K,\qquad v_\pi(R)\ge p-1,
\end{equation}
where
\begin{equation}\label{eq:Cres}
 \red{C_k}=\frac{(-1)^d\red A}{(h!)^{dr}}
 \bigl(H_k(b)-H_k(c)\bigr).
\end{equation}
The assertion $C_k\in\cO_K$ follows from Lemma~\ref{lem:stick}. By Lemma~\ref{lem:carries}, terms with nonzero carries have valuation at least $B+p$, and the remaining omitted terms have valuation at least $B+p-1$. After division by $\pi^B$, they contribute to the remainder $R$ in~\eqref{eq:precision}.

If any $C_k$ had nonzero residue, choose the least such $k$. For $j<k$, we would have $C_j\in p\cO_K$, and hence
\[
 v_\pi(\pi^{dj}C_j)\ge dj+p-1>dk\qquad(j<k).
\]
For $j>k$, the term $\pi^{dj}C_j$ also has valuation greater than $dk$, while $v_\pi(R)\ge p-1>dk$. Thus $\pi^{dk}C_k$ would be the unique term of least valuation in~\eqref{eq:precision}, contradicting $\Delta=0$. All residues in~\eqref{eq:Cres} therefore vanish, proving~\eqref{eq:Hequal}.

The products in~\eqref{eq:H} now agree modulo $T^h$. Since $h<p$, their logarithms are defined modulo $T^h$ over $\F_p$. The coefficient of $T^k$ in the difference of these logarithms is
\[
 \lambda_k(d)\sum_{i=0}^{r-1}
 \bigl(b^{-kp^i}-c^{-kp^i}\bigr),
\]
which is \eqref{eq:traceequal}.
\end{proof}

The following two forms of the moment argument will be used to recover the Frobenius orbit.
\begin{lemma}[Power sums]\label{lem:moments}
Let $x_1,\ldots,x_r,y_1,\ldots,y_r$ be nonzero elements of a field of characteristic $p>r$. Their two multisets are equal if either of the following holds:
\begin{enumerate}
\item $\sum_i x_i^j=\sum_i y_i^j$ for $1\le j\le r$;
\item for some integer $u\ge0$, the same equalities hold for $u+1\le j\le u+2r$.
\end{enumerate}
\end{lemma}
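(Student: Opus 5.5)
For part (1) the plan is to use Newton's identities. Let $p_j=\sum_i x_i^j$ and let $e_j$ be the elementary symmetric functions of $x_1,\ldots,x_r$; define $p'_j$ and $e'_j$ likewise for the $y_i$. For $1\le k\le r$ Newton's identity reads
\[
 k e_k=\sum_{j=1}^{k}(-1)^{j-1}e_{k-j}p_j .
\]
Since $p>r$, each $k\le r$ is invertible in the field. Induction on $k$ then shows that $e_k$ is determined by $p_1,\ldots,p_k$, so $e_k=e'_k$ for all $k\le r$. Hence $\prod_i(X-x_i)=\prod_i(X-y_i)$, and the multisets coincide. Nonzeroness is not needed in this part.

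Part (2) cannot proceed the same way, because we do not know the low power sums. I will use a Vandermonde, or linear recurrence, argument instead. First combine the two multisets. Let $z_1,\ldots,z_n$ be the distinct elements of $\{x_i\}\cup\{y_i\}$, so $n\le 2r$. Put $\mu_\ell=\#\{i:x_i=z_\ell\}-\#\{i:y_i=z_\ell\}$, an integer with $|\mu_\ell|\le r$. The hypothesis says
\[
 \sum_{\ell=1}^n \mu_\ell z_\ell^{\,j}=0\qquad(u+1\le j\le u+2r),
\]
where $\mu_\ell$ is read in $\F_p$. Now restrict to the $n$ consecutive exponents $j=u+1,\ldots,u+n$, which all lie in the given range because $n\le 2r$. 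The coefficient matrix $(z_\ell^{\,j})$ factors as a Vandermonde matrix in the distinct $z_\ell$ multiplied by the diagonal matrix $\mathrm{diag}(z_\ell^{\,u+1})$. This is invertible, since the $z_\ell$ are distinct and nonzero; this is where nonzeroness is used. Therefore $\mu_\ell=0$ in $\F_p$ for every $\ell$.

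Finally, $|\mu_\ell|\le r<p$ forces $\mu_\ell=0$ as an integer, so every element occurs with the same multiplicity in both multisets. The main care point is this lifting from $\F_p$ back to $\mathbb Z$. It is exactly where $p>r$ enters in part (2), just as invertibility of $k\le r$ uses it in part (1).
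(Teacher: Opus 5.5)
Your proposal is correct and follows the paper's proof essentially step for step. Part (1) uses Newton's identities together with the invertibility of $1,\ldots,r$. Part (2) solves the Vandermonde system on the first $n\le 2r$ consecutive exponents over the distinct elements of the union, then lifts the multiplicity differences from $\F_p$ to $\mathbb{Z}$ using $|\mu_\ell|\le r<p$.
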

\begin{proof}
In the first case, Newton's identities give equality of the elementary symmetric functions, since $1,\ldots,r$ are invertible. Hence $\prod_i(X-x_i)=\prod_i(X-y_i)$, and the multisets coincide.

In the second case, list the distinct elements in the union of the multisets as $z_1,\ldots,z_t$, where $t\le2r$, and let $n_i$ be the difference of their integer multiplicities. The first $t$ of the assumed equations give
\[
 \sum_{i=1}^t n_i z_i^{u+j}=0\qquad(1\le j\le t).
\]
The determinant of this system is
\[
 \left(\prod_{i=1}^t z_i^{u+1}\right)
 \prod_{1\le i<\ell\le t}(z_\ell-z_i),
\]
which is nonzero. Thus $n_i=0$ in the field. Since $|n_i|\le r<p$, every $n_i$ is zero as an integer.
\end{proof}

\begin{theorem}[Distinctness criterion]\label{thm:criterion}
Let $d\mid p-1$ and $h=(p-1)/d$. Suppose either
\begin{enumerate}
\item $r<h$ and $\lambda_j(d)\not\equiv0\pmod p$ for $1\le j\le r$; or
\item there is an integer $u\ge0$ such that $u+2r<h$ and $\lambda_j(d)\not\equiv0\pmod p$ for $u+1\le j\le u+2r$.
\end{enumerate}
Then, for $b,c\in\F_{p^r}^\times$,
\[
 \IK_{d-1}(p^r,b)=\IK_{d-1}(p^r,c)
 \quad\Longleftrightarrow\quad c=b^{p^j}\text{ for some }0\le j<r.
\]
\end{theorem}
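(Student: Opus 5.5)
The reverse implication is immediate from \eqref{eq:frob}: if $c=b^{p^j}$, then iterating $\IK_{d-1}(q,b^p)=\IK_{d-1}(q,b)$ gives equality. The substance is the forward implication. Here I would take Proposition~\ref{prop:traces} as the main input and combine it with Lemma~\ref{lem:moments}, applied to the Frobenius conjugates of $b^{-1}$ and $c^{-1}$.

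Suppose $\IK_{d-1}(p^r,b)=\IK_{d-1}(p^r,c)$. By \eqref{eq:traceequal}, for every $1\le k<h$ with $\lambda_k(d)\not\equiv0\pmod p$ we get $\Tr(b^{-k})=\Tr(c^{-k})$ in $\F_p$. Set $x_i=b^{-p^{i}}$ and $y_i=c^{-p^{i}}$ for $0\le i<r$. These are nonzero elements of $\F_{p^r}$, and $\Tr(b^{-k})=\sum_i x_i^k$ and $\Tr(c^{-k})=\sum_i y_i^k$, viewed in $\F_{p^r}\supset\F_p$.

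In case (1), the hypotheses $r<h$ and $\lambda_j(d)\ne0$ for $j\le r$ give equality of the power sums for $1\le j\le r$. Since $r<h<p$, the characteristic condition $p>r$ of Lemma~\ref{lem:moments}(1) holds.

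In case (2), the range $u+1\le j\le u+2r$ lies below $h$, so the equalities hold there. Also $p>h>2r$, so Lemma~\ref{lem:moments}(2) applies.

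In either case the multisets $\{x_i\}$ and $\{y_i\}$ coincide. In particular $c^{-1}=y_0$ equals some $x_j=b^{-p^j}$, so $c=b^{p^j}$. This is the required conclusion.

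The only point needing care is that the multisets are formed over all $r$ indices, even when $b$ lies in a proper subfield and its conjugates repeat. Lemma~\ref{lem:moments} is stated for multisets, so repetition causes no problem. The condition $b^M=c^M$ from Proposition~\ref{prop:traces} is not needed.

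I expect no real obstacle: the analytic work is already done in Proposition~\ref{prop:traces}. The remaining step is simply to verify that the inequalities $k<h$ and $p>r$ (or $p>2r$) needed by the two lemmas follow from the stated hypotheses, using $h\le p-1$.
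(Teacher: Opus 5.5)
Your proposal is correct and is essentially the paper's proof. Proposition~\ref{prop:traces} gives the power-sum equalities in the required range, Lemma~\ref{lem:moments} identifies the multisets of Frobenius conjugates of $b^{-1}$ and $c^{-1}$, and the converse follows from \eqref{eq:frob}; your checks that $r<h<p$, respectively $2r<h<p$, are exactly the verifications needed.
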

\begin{proof}
Under either hypothesis, $p>r$. By Proposition~\ref{prop:traces}, equality of the sums gives equality of the power sums required in the corresponding case of Lemma~\ref{lem:moments}. Therefore the multisets
\[
 \{b^{-1},b^{-p},\ldots,b^{-p^{r-1}}\},\qquad
 \{c^{-1},c^{-p},\ldots,c^{-p^{r-1}}\}
\]
coincide. In particular, $c^{-1}=b^{-p^j}$ for some $0\le j<r$. The converse follows from~\eqref{eq:frob}.
\end{proof}

\section{Galois stabilizers and algebraic degrees}\label{sec:degree}
Once distinctness is known, the degree is determined by the scalar symmetries of the Frobenius orbit. Recall the integer $g_b$ defined in~\eqref{eq:gb}.

\begin{proposition}[Degree formula]\label{prop:degree}
Assume that, for all $b,c\in\F_{p^r}^\times$, equality $\IK_{d-1}(p^r,b)=\IK_{d-1}(p^r,c)$ implies that $b$ and $c$ are Frobenius conjugate. Then, for every $b\in\F_{p^r}^\times$,
\begin{equation}\label{eq:generaldegree}
 \degree\IK_{d-1}(p^r,b)=\frac{p-1}{\gcd(dg_b,p-1)}.
\end{equation}
\end{proposition}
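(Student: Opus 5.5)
The degree equals the index in $\Gal(\Q(\zeta_p)/\Q)\cong\F_p^\times$ of the stabilizer of $\IK_{d-1}(p^r,b)$. Indeed, $\IK_{d-1}(p^r,b)\in\Q(\zeta_p)$, so $\Q(\IK_{d-1}(p^r,b))$ is the fixed field of the subgroup $S_b=\{a\in\F_p^\times:\sigma_a\IK_{d-1}(p^r,b)=\IK_{d-1}(p^r,b)\}$. Hence
\[
 \degree\IK_{d-1}(p^r,b)=\frac{p-1}{|S_b|}.
\]
By~\eqref{eq:galois}, $a\in S_b$ if and only if $\IK_{d-1}(p^r,ba^{-d})=\IK_{d-1}(p^r,b)$. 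By the distinctness hypothesis together with~\eqref{eq:frob}, this holds if and only if $ba^{-d}$ is Frobenius conjugate to $b$. Frobenius conjugates of $b$ are exactly the roots of $P_b$. So $a\in S_b$ if and only if $\mu=a^{-d}$ satisfies $P_b(\mu b)=0$, which is equivalent to the polynomial identity $P_b(\mu X)=\mu^sP_b(X)$. Both sides are monic up to the factor $\mu^s$, and $P_b(\mu X)/\mu^s$ is irreducible of degree $s$ with root $\mu^{-1}b$.

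Comparing coefficients, $P_b(\mu X)=\mu^sP_b(X)$ holds if and only if $\mu^{s-k}c_k=\mu^sc_k$ for all $k$, that is, $\mu^k=1$ whenever $c_k\ne0$. This is equivalent to $\mu^{g_b}=1$, because the exponents $k$ with $c_k\neq 0$ generate $g_b\mathbb Z$. Thus
\[
 S_b=\{a\in\F_p^\times: a^{dg_b}=1\}.
\]
In the cyclic group $\F_p^\times$ of order $p-1$, this subgroup has order $\gcd(dg_b,p-1)$, which gives~\eqref{eq:generaldegree}.

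The main point requiring care is the identification of the Galois action. We use $\sigma_a$ on $\Q(\zeta_p)$ as in~\eqref{eq:galois}, with $a\in\F_p^\times$ viewed as an element of $\F_{p^r}^\times$. The only subtle step is the equivalence between $\mu b$ being a conjugate of $b$ and the scaling identity for $P_b$. This follows because $P_b(\mu X)\mu^{-s}$ is monic irreducible over $\F_p$, since $\mu\in\F_p^\times$, and it vanishes at $\mu^{-1}b$. If $\mu b$ is a root of $P_b$, then so is $\mu^{-1}b$: the map $x\mapsto \mu x$ permutes the finite set of roots $\{b^{p^i}\}$, because it commutes with Frobenius and sends the orbit of $b$ into itself. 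Hence $P_b(\mu X)\mu^{-s}$ and $P_b$ are both the minimal polynomial of $\mu^{-1}b$, so they are equal.
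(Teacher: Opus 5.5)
Your proof is correct and follows the paper's argument. You identify the stabilizer via \eqref{eq:galois}, \eqref{eq:frob} and distinctness, then compare coefficients of $P_b$ with those of its scaled version to obtain $\{a:a^{dg_b}=1\}$, of order $\gcd(dg_b,p-1)$. The only difference is cosmetic: the paper compares $P_b$ directly with the minimal polynomial $t^sP_b(X/t)$ of $tb$, which avoids your detour through $\mu^{-1}b$ and the orbit-permutation step.
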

\begin{proof}
By~\eqref{eq:galois} and the distinctness hypothesis, $\sigma_a$ fixes $\IK_{d-1}(p^r,b)$ if and only if $a^{-d}b$ is a Frobenius conjugate of $b$. For $t\in\F_p^\times$, the minimal polynomial of $tb$ is
\[
 t^sP_b(X/t)=X^s+c_1tX^{s-1}+\cdots+c_st^s.
\]
Thus $tb$ is conjugate to $b$ if and only if $t^k=1$ for every $k$ with $c_k\ne0$, equivalently $t^{g_b}=1$. Taking $t=a^{-d}$, we obtain the stabilizer
\[
 \{a\in\F_p^\times:a^{dg_b}=1\},
\]
whose order is $\gcd(dg_b,p-1)$. Its index in $\Gal(\Q(\zeta_p)/\Q)$ is the degree in~\eqref{eq:generaldegree}.
\end{proof}

\section{Proofs of the main theorems}\label{sec:main}
\subsection{Quadratic extensions}
We first record the nonvanishing assertion used in Theorem~\ref{thm:quadratic}.

\begin{lemma}\label{lem:Dnonzero}
For every integer $d\ge2$, the integer $D_d$ defined in~\eqref{eq:D} is nonzero.
\end{lemma}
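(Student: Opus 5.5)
We need $2(d+1)^d \ne \binom{2d}{d}^2$ for every $d\ge2$. The plan is a parity argument on the $2$-adic valuation, with a direct check for small $d$ if needed.

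The key fact is $v_2\binom{2d}{d}=s_2(d)$, the number of ones in the binary expansion of $d$. This follows from Kummer's theorem, since adding $d+d$ in base $2$ produces exactly $s_2(d)$ carries. Hence
\[
v_2\Bigl(\binom{2d}{d}^2\Bigr)=2s_2(d),
\]
which is even.

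On the other side, $v_2\bigl(2(d+1)^d\bigr)=1+d\,v_2(d+1)$. We split by the parity of $d$.
\begin{itemize}
\item If $d$ is odd, then $d+1$ is even. We cannot conclude parity directly here, because $1+d\,v_2(d+1)$ may be odd or even.
\item If $d$ is even, then $d+1$ is odd, so the valuation is $1$, which is odd. Since $2s_2(d)$ is even, the two valuations differ and $D_d\ne0$.
\end{itemize}

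For odd $d$, write $d+1=2^a w$ with $a\ge1$ and $w$ odd. The valuation $1+da$ is odd exactly when $da$ is even, that is, when $a$ is even. So the parity argument still settles every odd $d$ with $v_2(d+1)$ even. The remaining case is odd $d$ with $a=v_2(d+1)$ odd.

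For that remaining case we switch to a size comparison. Two standard bounds give
\[
\binom{2d}{d}^2\ge \frac{16^d}{4d}\qquad\text{and}\qquad 2(d+1)^d.
\]
The ratio $16^d\big/\bigl(8d(d+1)^d\bigr)=\bigl(16/(d+1)\bigr)^d/(8d)$ exceeds $1$ while $d+1<16$, which covers small $d$. For large $d$, however, $(d+1)^d$ grows faster than $\binom{2d}{d}^2\le 16^d$. So the two sides cross, and size alone does not finish the proof.

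This suggests using $2$-adic valuations more sharply: compare $2s_2(d)$ with $1+da$. For odd $d$ with $a\ge1$, we have $1+da\ge 1+d$. On the other hand, $2s_2(d)\le 2\log_2(d+1)<1+d$ for every $d\ge2$. Therefore $v_2\bigl(2(d+1)^d\bigr)>v_2\bigl(\binom{2d}{d}^2\bigr)$ whenever $d$ is odd.

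Combining the two parity cases:
\begin{itemize}
\item for $d$ even, the valuations are $1$ and an even number;
\item for $d$ odd, $1+da>2s_2(d)$.
\end{itemize}
In either case the two terms of $D_d$ have different $2$-adic valuations, so $D_d\ne0$.

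The main obstacle is the odd case. The inequality $2\log_2(d+1)<d+1$ fails only for $d\le3$, and it needs care at $d=3$: there $2s_2(3)=4$ while $1+3\cdot2=7$, so the valuations still differ. I would therefore verify $d=2,3$ numerically: $D_2=18-36=-18$ and $D_3=128-400=-272$.
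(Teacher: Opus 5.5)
Your proof is correct, and it takes a genuinely different route from the paper.

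The paper argues by size. It tabulates $D_2,\dots,D_{10}$, all nonzero. It then uses Bernoulli's inequality to show that $R_d=2(d+1)^d/\binom{2d}{d}^2$ satisfies $R_{d+1}/R_d>1$ for $d\ge6$, so $R_{10}>1$ forces $R_d>1$ for all $d\ge10$.

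You instead separate the two terms $2$-adically. Kummer's theorem gives $v_2\bigl(\binom{2d}{d}^2\bigr)=2s_2(d)$. On the other side, $v_2\bigl(2(d+1)^d\bigr)=1+d\,v_2(d+1)$. This equals $1$ for even $d$. For odd $d\ge5$ it is at least $d+1>2\log_2(d+1)\ge2s_2(d)$, the last step because $d\ge2^{s_2(d)}-1$. That leaves only $d=3$, where $7\ne4$.

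What each buys:
\begin{itemize}
\item Your argument needs essentially no computation. It also determines $v_2(D_d)$ exactly: $1$ for even $d$ and $2s_2(d)$ for odd $d$, consistent with $D_2=-18$ and $D_3=-16\cdot17$.
\item The paper's argument gives the sign and growth of $D_d$ (negative for $2\le d\le9$, positive for $d\ge10$), which yours does not.
\end{itemize}

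For the write-up, a few clean-ups:
\begin{itemize}
\item Drop the abandoned size-comparison paragraph. Its claim that $16^d/\bigl(8d(d+1)^d\bigr)>1$ whenever $d+1<16$ is false; it already fails at $d=9$.
\item Drop the superseded parity split for odd $d$.
\item State $2\log_2(d+1)<d+1$ only for $d\ge4$ from the outset, rather than asserting it for every $d\ge2$ and correcting it afterwards.
\item Omit the separate numerical check at $d=2$, since your even case already covers it.
\end{itemize}
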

\begin{proof}
Put
\[
 R_d=\frac{2(d+1)^d}{\binom{2d}{d}^{\!2}}.
\]
For $d=2,\ldots,10$, the values of $D_d$ are, respectively,
\begin{align*}
 D_d={}&-18,\ -272,\ -3650,\ -47952,\ -618478,\\
       &-7584320,\ -79543458,\ -363904400,\ 17740069666.
\end{align*}
Moreover, Bernoulli's inequality gives
\begin{align*}
 \frac{R_{d+1}}{R_d}
 &=\frac{(d+2)(d+1)^2}{4(2d+1)^2}
   \left(1+\frac1{d+1}\right)^d\\
 &\ge\frac{(d+2)(d+1)}{4(2d+1)}>1\qquad(d\ge6).
\end{align*}
Thus $R_d>1$ for every $d\ge10$, since $R_{10}>1$. Together with the displayed values, this proves the assertion.
\end{proof}

\begin{proof}[Proof of Theorem~\ref{thm:quadratic}]
The first two coefficients of \eqref{eq:fseries} are
\[
 a_1(d)=\frac1{d^d(d!)^2},\qquad
 a_2(d)=\frac{(d+1)^d}{d^{2d}((2d)!)^2}.
\]
Consequently,
\begin{equation}\label{eq:lambda12}
 \lambda_1(d)=a_1(d),\qquad
 \lambda_2(d)=a_1(d)^2\frac{D_d}{2\binom{2d}{d}^{\!2}}.
\end{equation}
Under \eqref{eq:quadcond}, both coefficients are nonzero modulo $p$ and $h>2$. The first case of Theorem~\ref{thm:criterion}, with $r=2$, proves \eqref{eq:quaddist}.

To obtain the degree formula, first suppose $b\in\F_p^\times$. Then $g_b=1$ and $\Tr_{\F_{p^2}/\F_p}(b)=2b\ne0$. If $b\notin\F_p$, its minimal polynomial is
\[
 P_b(X)=X^2-\Tr(b)X+b^{p+1}.
\]
Thus $g_b=1$ if $\Tr(b)\ne0$ and $g_b=2$ if $\Tr(b)=0$. Proposition~\ref{prop:degree}, together with $d\mid p-1$, proves \eqref{eq:quaddegree}.
\end{proof}

\subsection{Two-dimensional sums}
\begin{proof}[Proof of Theorem~\ref{thm:twodim}]
For $d=3$, the first five logarithmic coefficients are
\begin{equation}\label{eq:lambdas3}
\begin{aligned}
 \lambda_1(3)&=\frac1{972},&
 \lambda_2(3)&=-\frac{17}{47239200},\\
 \lambda_3(3)&=\frac{163}{826497043200},&
 \lambda_4(3)&=-\frac{200407}{1620099504080640000},\\
 \lambda_5(3)&=\frac{457217}{5511578512882337280000},&&
\end{aligned}
\end{equation}
where $457217=23\cdot103\cdot193$. They are obtained from the recurrence
\begin{equation}\label{eq:recurrence}
 \lambda_j(d)=a_j(d)-\frac1j\sum_{k=1}^{j-1}k\lambda_k(d)a_{j-k}(d),
\end{equation}
which follows by comparing coefficients in $f_d'=f_d(\log f_d)'$.

The hypothesis $p>3r+1$ ensures that $r<h=(p-1)/3$ and that $\lambda_1(3),\ldots,\lambda_r(3)$ are $p$-integral. Their numerators, together with $p\equiv1\pmod3$, leave only the following primes to consider separately:
\[
 103,\quad163,\quad193,\quad200407.
\]
For every other prime satisfying~\eqref{eq:twocond}, these coefficients are nonzero modulo $p$, so the first case of Theorem~\ref{thm:criterion} applies.

For each of the four remaining primes, recurrence~\eqref{eq:recurrence} gives the following block of ten nonzero residues. The entries in each row are listed in increasing order of the index $j$.
\begin{center}
\small
\setlength{\tabcolsep}{5pt}
\begin{tabular}{rcl}
\toprule
$p$ & Indices $j$ & Residues of $\lambda_j(3)$ modulo $p$\\
\midrule
$163$ & $4$--$13$ & $161,26,157,140,54,1,148,56,109,121$\\
$200407$ & $5$--$14$ & $180025,96682,14235,112480,111584,$\\
& & $30660,133582,65716,7932,77321$\\
$103$ & $6$--$15$ & $60,42,22,75,42,73,1,84,77,62$\\
$193$ & $6$--$15$ & $165,151,105,132,99,34,3,137,100,118$\\
\bottomrule
\end{tabular}
\end{center}
In each row the last index is less than $(p-1)/3$. Taking the first $2r$ entries of that row therefore satisfies the second hypothesis of Theorem~\ref{thm:criterion} for every $r\le5$. This proves~\eqref{eq:twodist} for the remaining primes. Finally, Proposition~\ref{prop:degree} with $d=3$ gives~\eqref{eq:twodegree}.
\end{proof}

\begin{remark}
The condition $p\nmid D_d$ is sufficient, but is not asserted to be necessary: it ensures that $\lambda_2(d)$ is nonzero modulo $p$. For $d=3$, the identity $D_3=-272=-16\cdot17$ shows that this condition excludes no odd prime $p\equiv1\pmod3$. Thus Theorems~\ref{thm:quadratic} and~\ref{thm:twodim} have the same hypotheses when $d=3$ and $r=2$.
\end{remark}

\section{Consequences}\label{sec:consequences}
Counting Frobenius orbits gives the number of distinct values in each family.

\begin{corollary}\label{cor:count}
Under the hypotheses of Theorem~\ref{thm:quadratic},
\[
 \#\{\IK_{d-1}(p^2,b):b\in\F_{p^2}^\times\}
 =\frac{p^2+p-2}{2}.
\]
Under the hypotheses of Theorem~\ref{thm:twodim},
\begin{equation}\label{eq:burnside}
 \#\{\IK_2(p^r,b):b\in\F_{p^r}^\times\}
 =\frac1r\sum_{j=0}^{r-1}\bigl(p^{\gcd(r,j)}-1\bigr),
\end{equation}
where $\gcd(r,0)=r$.
\end{corollary}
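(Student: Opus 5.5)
By Theorems~\ref{thm:quadratic} and~\ref{thm:twodim}, equality of sums is equivalent to Frobenius conjugacy of the parameters. The number of distinct values is therefore the number of orbits of the Frobenius map $x\mapsto x^p$ acting on $\F_{p^r}^\times$. The plan is to count these orbits with Burnside's lemma for the cyclic group $\langle\phi\rangle\cong\mathbb Z/r\mathbb Z$, where $\phi(x)=x^p$ acts on $\F_{p^r}^\times$.

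For $0\le j<r$, the fixed points of $\phi^j$ in $\F_{p^r}^\times$ are the nonzero solutions of $x^{p^j}=x$ lying in $\F_{p^r}$. These form $\F_{p^j}\cap\F_{p^r}=\F_{p^{\gcd(r,j)}}$ with $0$ removed. For $j=0$ we use the convention $\gcd(r,0)=r$, which gives all of $\F_{p^r}^\times$. So $\phi^j$ has exactly $p^{\gcd(r,j)}-1$ fixed points. Burnside's lemma then gives
\[
 \#\{\text{orbits}\}=\frac1r\sum_{j=0}^{r-1}\bigl(p^{\gcd(r,j)}-1\bigr),
\]
which is~\eqref{eq:burnside}.

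The quadratic case is the instance $r=2$ of the same count. The identity fixes all $p^2-1$ elements, and $\phi$ fixes the $p-1$ elements of $\F_p^\times$. The number of orbits is therefore
\[
 \frac{(p^2-1)+(p-1)}{2}=\frac{p^2+p-2}{2}.
\]
One can check this directly: there are $p-1$ orbits of size one and $(p^2-p)/2$ orbits of size two, and these add up to the same number.

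The count itself is routine. The one point that needs care is the reduction to orbits: we must know that distinct orbits give distinct values and that each orbit gives a single value. Both directions are exactly the biconditionals~\eqref{eq:quaddist} and~\eqref{eq:twodist}, so the value map induces a bijection from the set of Frobenius orbits onto the set of values.
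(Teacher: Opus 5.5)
Your proposal is correct and matches the paper's proof. The biconditionals \eqref{eq:quaddist} and \eqref{eq:twodist} put the values in bijection with Frobenius orbits, and Burnside's lemma with the fixed-point sets $\F_{p^{\gcd(r,j)}}^\times$ gives \eqref{eq:burnside}; the quadratic count is the case $r=2$.
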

\begin{proof}
The distinct values are in bijection with the Frobenius orbits on $\F_{p^r}^\times$. The fixed points of the $j$th power of Frobenius are $\F_{p^{\gcd(r,j)}}^\times$. Burnside's lemma gives \eqref{eq:burnside}, and its specialization to $r=2$ gives the first assertion.
\end{proof}

For cubic extensions, the degree formula can be expressed directly in terms of the parameter.
\begin{corollary}\label{cor:cubic}
If $p\equiv1\pmod3$ and $p\ge13$, then, for $b\in\F_{p^3}^\times$,
\[
 \degree\IK_2(p^3,b)=
 \begin{cases}
 (p-1)/\gcd(9,p-1),&b\notin\F_p\text{ and }b^3\in\F_p,\\[2pt]
 (p-1)/3,&\text{otherwise}.
 \end{cases}
\]
\end{corollary}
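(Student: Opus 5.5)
The plan is to specialize Theorem~\ref{thm:twodim} to $r=3$ and then compute $g_b$ from the degree of $b$ and its minimal polynomial. First check the hypotheses: $p\equiv1\pmod3$ and $p\ge13$ give $p>3r+1=10$. Hence \eqref{eq:twodegree} applies, and $\degree\IK_2(p^3,b)=(p-1)/\gcd(3g_b,p-1)$. Since $\F_{p^3}$ has no intermediate fields, either $b\in\F_p$ or $b$ has degree $3$ over $\F_p$.

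If $b\in\F_p^\times$, then $P_b(X)=X-b$. So $g_b=1$ and the degree is $(p-1)/\gcd(3,p-1)=(p-1)/3$, which falls under ``otherwise'' because $b\in\F_p$.

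If $b$ has degree $3$, write $P_b(X)=X^3+c_1X^2+c_2X+c_3$ with $c_3\ne0$. Then $g_b=\gcd\{k:c_k\ne0\}$ is either $3$ (when $c_1=c_2=0$) or $1$, since any nonzero $c_1$ or $c_2$ forces the gcd with $3$ to be $1$. The case $g_b=3$ means $P_b(X)=X^3+c_3$, i.e.\ $b^3=-c_3\in\F_p$.

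Conversely, suppose $b\notin\F_p$ and $b^3=a\in\F_p$. Then $b$ is a root of $X^3-a$, which has degree $3$. Because $b$ has degree $3$, this polynomial is irreducible and equals $P_b$, so $g_b=3$. Thus $g_b=3$ exactly in the first case of the corollary, giving degree $(p-1)/\gcd(9,p-1)$; otherwise $g_b=1$ and the degree is $(p-1)/3$, using $3\mid p-1$.

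There is no real obstacle; the only points needing care are that $p\ge13$ yields the strict inequality $p>10$ required in \eqref{eq:twocond}, and that the absence of intermediate fields makes the case split exhaustive.
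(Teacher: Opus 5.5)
Your proposal is correct and follows the paper's own argument: specialize Theorem~\ref{thm:twodim} to $r=3$ and observe that $g_b=1$ unless $P_b(X)=X^3-v$ with $v\in\F_p^\times$, which happens exactly when $b\notin\F_p$ and $b^3\in\F_p$, in which case $g_b=3$. You also spell out two points the paper leaves implicit: that $p\ge13$ gives $p>3r+1=10$, and the converse that $X^3-a$ must equal $P_b$ when $b$ has degree $3$.
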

\begin{proof}
The degree $s$ of $b$ over $\F_p$ is either $1$ or $3$. If $s=1$, then $g_b=1$. If $s=3$, the nonzero constant coefficient forces $g_b\mid3$. We have $g_b=3$ precisely when $P_b(X)=X^3-v$ for some $v\in\F_p^\times$, equivalently when $b^3\in\F_p$. Apply Theorem~\ref{thm:twodim}.
\end{proof}

\begin{remark}
For a parameter satisfying $b\notin\F_p$ and $b^3\in\F_p$, the degree in Corollary~\ref{cor:cubic} is smaller than $(p-1)/3$ precisely when $9\mid p-1$. In general, the stabilizer in Proposition~\ref{prop:degree} describes the degree reduction through the scalar symmetries of the minimal polynomial.
\end{remark}


\begin{thebibliography}{9}
\bibitem{GK}
B. H. Gross and N. Koblitz, \emph{Gauss sums and the $p$-adic $\Gamma$-function}, Ann. of Math. (2) \textbf{109} (1979), no.~3, 569--581. \href{https://doi.org/10.2307/1971226}{doi:10.2307/1971226}.
\bibitem{KRV}
K. Kononen, M. Rinta-aho and K. V\"a\"an\"anen, \emph{On the degree of a Kloosterman sum as an algebraic integer}, preprint, 2011, \href{https://arxiv.org/abs/1107.0188}{arXiv:1107.0188}.
\bibitem{Lin25}
X. Lin, \emph{On algebraic degrees of certain exponential sums over finite fields}, Forum Math. \textbf{37} (2025), no.~5, 1433--1438. \href{https://doi.org/10.1515/forum-2024-0365}{doi:10.1515/forum-2024-0365}.
\bibitem{Lin26}
X. Lin, \emph{Distinctness of the generalized $n$-dimensional Kloosterman sums}, preprint, 2026, \href{https://arxiv.org/abs/2607.04177}{arXiv:2607.04177}.
\bibitem{LWfields}
X. Lin and D. Wan, \emph{On inverted Kloosterman sums over finite fields}, Math. Z. \textbf{306} (2024), article~60. \href{https://doi.org/10.1007/s00209-024-03457-0}{doi:10.1007/s00209-024-03457-0}.
\bibitem{LWdegree}
X. Lin and D. Wan, \emph{On algebraic degrees of inverted Kloosterman sums}, Finite Fields Appl. \textbf{99} (2024), article~102477. \href{https://doi.org/10.1016/j.ffa.2024.102477}{doi:10.1016/j.ffa.2024.102477}.
\bibitem{Wan}
D. Wan, \emph{Minimal polynomials and distinctness of Kloosterman sums}, Finite Fields Appl. \textbf{1} (1995), no.~2, 189--203. \href{https://doi.org/10.1006/ffta.1995.1015}{doi:10.1006/ffta.1995.1015}.
\end{thebibliography}
\end{document}